%%%%%%%%%%%%%%%%%%%%%%%%%%%%%%%%%%%%%%%%%%%%%%%%%%%%%%%%%%%%%%%%%%%%%%%%%%%%%%%
%%
%%                  Denseness of Sets of Supercyclic Vectors
%%
%%                               Carlos Kubrusly
%%
%%%%%%%%%%%%%%%%%%%%%%%%%%%%%%%%%%%%%%%%%%%%%%%%%%%%%%%%%%%%%%%%%%%%%%%%%%%%%%%

\documentclass{amsart}
\hyphenpenalty=0
\vbadness=10000

\font\fiverm=cmr5
\font\ehsc=cmcsc10 scaled 850

\let\\=\backslash
\let\sse=\subseteq
\let\noi=\noindent

\let\limply=\Longrightarrow

\def\0{\{0\}}
\def\span{{\kern.5pt{\rm span}\kern1pt}}
\def\conv{{\;\longrightarrow\;}}
\def\wconv{{{\buildrel_{\scriptstyle w}\over\conv}}}
\def\sconv{{{\buildrel_{\scriptstyle s}\over\conv}}}
\def\query{{{\buildrel_{^{\scriptstyle ?}}\over\limply}}}

\def\sslash{\hbox{{\fiverm/}}}
\def\notwconv{{{\wconv\kern-13pt\sslash}\kern9pt}}
\def\notsconv{{{\sconv\kern-13pt\sslash}\kern9pt}}

\def\B{{\mathcal B}}
\def\C{{\kern.5pt\mathcal C}}
\def\Oe{{\mathcal O}}
\def\X{{\mathcal X}}

\def\BX{{\B[\X]}}

\def\CC{{\mathbb C\kern.5pt}}
\def\FF{{\mathbb F\kern.5pt}}
\def\NN{{\mathbb N\kern.5pt}}
\def\RR{{\mathbb R\kern.5pt}}

\newsymbol\varnothing 203F
\let\void=\varnothing

\def\newmatrix#1{\null\,\vcenter{
			\baselineskip=8pt\mathsurround=-0pt\ialign{
			\hfil ${##}$
			\hfil &&
			\hfil ${##}$
			\hfil \crcr
			\mathstrut \crcr
			\noalign{\kern-\baselineskip}#1 \crcr
			\mathstrut \crcr
			\noalign{\kern-\baselineskip} \crcr }}\!}

\newtheorem{theorem}{Theorem}
\newtheorem{lemma}{Lemma}
\newtheorem{corollary}{Corollary}
\newtheorem{proposition}{Proposition}

\theoremstyle{definition}
\newtheorem{definition}{Definition}
\newtheorem{remark}{Remark}

\numberwithin{theorem}{section}
\numberwithin{lemma}{section}
\numberwithin{corollary}{section}
\numberwithin{proposition}{section}
\numberwithin{conjecture}{section}
\numberwithin{definition}{section}
\numberwithin{remark}{section}
\numberwithin{question}{section}

\begin{document}

\vglue-75pt\noindent
\hfill{\it Mathematical Proceedings of the Royal Irish Academy}\/,
{\bf 120A} (2020) 7--18

\vglue15pt
\title[Denseness of supercyclic vectors]
      {Denseness of sets of supercyclic vectors}
\author[C.S. Kubrusly]{C.S. Kubrusly}
\address{Applied Mathematics Department, Federal University,
         Rio de Janeiro, Brazil}
\email{carloskubrusly@gmail.com}
\subjclass{Primary 47A16; Secondary 47A15}
\renewcommand{\keywordsname}{Keywords}
\keywords{Supercyclic operators, weak dynamics, weak stability,
          weak supercyclicity.}
\date{November 24, 2019}

\begin{abstract}
The sets of strongly supercyclic, weakly l-sequentially supercyclic, weakly
sequentially supercyclic, and weakly supercyclic vectors for an arbitrary
normed-space operator are all dense in the normed space, regardless the
notion of denseness one is considering, provided they are nonempty.
\end{abstract}

\maketitle

%%%%%%%%%%%%%%%%%%%%%%%%%%%%%%%%%%%%%%%%%%%%%%%%%%%%%%%%%% SECTION 1
\vskip-15pt\noi
\section{Introduction}

Strong supercyclicity is an important topic in operator theory for some
decades already (see, e.g., \cite{AB, BM2, BBP, Her, Hez, Sal})$.$ Several
forms of weak supercyclicity have recently been investigated as well
(see, e.g., \cite{BM, BM1, Dug, Kub, KD1, KD2, MS, San1, San2, Shk}).

\vskip4pt
Supercyclicity for normed-space operators means denseness of a projective
orbit$.$ Associated with the notion of denseness one is considering there
corresponds the concepts of strongly supercyclic, weakly l-sequentially
supercyclic, weakly sequentially supercyclic, and weakly supercyclic vectors.

\vskip4pt
This paper focus on another question on denseness, viz., denseness of sets of
supercyclic vectors$.$ Besides common notions of denseness in the weak and
norm topologies, intermediate notions of weak sequential and weak
l-sequential denseness are considered$.$ The main result appears in
Theorem 5.1 which leads in Corollary 6.1 to the following consequence$:$ if
any of the above sets of supercyclic vectors is nonempty, then it is dense
with respect to any notion of denseness$.$ In particular, the set of weakly
l-sequentially supercyclic vectors is norm dense (i.e., strongly dense),
which is a useful improvement over previously known results along this line.

\vskip4pt
The paper gather results on the above mentioned four forms of denseness for
sets of {\it supercyclic}\/ vectors (regarding all four forms of
supercyclicity) into a concise statement, with emphasis on the set of weakly
l-sequentially supercyclic vectors.

%%%%%%%%%%%%%%%%%%%%%%%%%%%%%%%%%%%%%%%%%%%%%%%%%%%%%%%%%% SECTION 2
\section{Preliminary Notions}

Let $\FF$ stand either for the complex field $\CC$ or for the real field
$\RR$, and let $\X$ be an infinite-dimensional normed space over $\FF$$.$ An
$\X$-valued sequence $\{x_n\}$ is {\it strongly convergent}\/ if there is an
${x\in\X}$ such that ${\|x_n-x\|\to0}$ (notation$:$ ${x_n\!\sconv x}$ or
$x={s\hbox{\,-}\lim x_n}$), and it is {\it weakly convergent}\/ if there is
an ${x\in\X}$ such that ${|f(x_n-x)|\to0}$ for every $f$ in the dual space
$\X^*$ of $\X$ (notation$:$ ${x_n\!\wconv x}$ or $x={w\hbox{\,-}\lim x_n}).$
Strong convergence trivially implies weak convergence (to the same limit).

\vskip4pt
Subsets of $\X$ are {\it strongly closed}\/ or {\it weakly closed}\/ if
they are closed in the norm or weak topologies of $\X.$ {\it Strong
closure}\/ or {\it weak closure}\/ of a set $A$ is the smallest strongly
or weakly closed set that includes $A$ (i.e., the intersection of all
strongly or weakly closed sets including $A$ --- notation$:$ $A^-$ or
$A^{-w}).$ Thus $A$ is strongly closed or weakly closed if and only if
${A=A^-}$ or ${A=A^{-w}}.$ A set $A$ is {\it strongly dense}\/ or
{\it weakly dense}\/ in $\X$ if ${A^-\!=\X}$ or ${A^{-w}\!=\X}$.

%%%%%%%%%%%%%%%%%%%%%%%%%%% DEFINITION 2.1
\vskip4pt\noi
\begin{definition}
Let $A$ be a subset of $\X$.
\begin{description}
\vskip4pt
\item$\kern-5pt$(a)
The set $A$ is {\it weakly sequentially closed}\/ if every $A$-valued weakly
convergent sequence has its limit in $A$ (i.e., if
$\{{x=w\hbox{\,-}\lim x_n}$ with ${x_n\in A}$ $\limply$ ${x\in A}\}$).
\vskip4pt
\item$\kern-5pt$(b)
The {\it weak sequential closure}\/ of $A$ is the smallest weakly sequentially
closed set that incudes $A$ (i.e., is the intersection of all weakly
sequentially closed sets including $A$) --- notation: $A^{-ws}$.
\vskip4pt
\item$\kern-5pt$(c)
The set $A$ is {\it weakly sequentially dense}\/ in $\X$ if ${A^{-ws}\!=\X}$.
\vskip4pt
\item$\kern-5pt$(d)
The {\it weak limit set}\/ $A^{-wl}\!$ of $A$ is the set of all weak limits of
weakly convergent $A$-valued sequences (i.e.,
$A^{-wl}=\{{x\in\X}\!:$ ${x=w\hbox{\,-}\lim x_n}$ with ${x_n\in A}\}$.
\vskip4pt
\item$\kern-5pt$(e)
The set $A$ is {\it weakly l-sequentially dense}\/ in $\X$ if ${A^{-wl}\!=\X}$.
\end{description}
\end{definition}

\vskip6pt
A collection of basic results required in the sequel is given below$.$
Most are either straightforward or well-known and standard$.$
We prove item (e) only.

%%%%%%%%%%%%%%%%%%%%%%%%%%% PROPOSITION 2.1
\vskip4pt\noi
\begin{proposition}
Consider the setup of Definition\/ $2.1$.
\begin{description}
\vskip4pt
\item{$\kern-9pt$\rm(a)}
$\,A$ is weakly closed $\,\limply\!$ $A$ is weakly sequentially closed
$\,\limply\!$ $A$ is strongly closed.
\vskip4pt
\item{$\kern-9pt$\rm(b)}
$\,A\sse A^-\sse A^{-wl}\sse A^{-ws}\sse A^{-w}\sse\X$.
\vskip4pt
\item{$\kern-9pt$\rm(c)}
$\,$Now consider the following assertions\/$.$
\vskip4pt
\begin{description}
\item{$\kern-9pt$\rm(c$_0$)}
$\,A$ is strongly dense\/ $($equivalently, for every\/ ${x\in\X}$ there exists
an\/ $A$-valued sequence\/ $\{x_n\}$ such that\/ ${x_n\!\sconv x}\kern1pt)$,
\vskip4pt
\item{$\kern-9pt$\rm(c$_1$)}
$\,A$ is weakly l-sequentially dense\/ $($equivalently, for every\/
${x\in\X}$ there exists an\/ $A$-valued sequence\/ $\{x_n\}$ such that\/
${x_n\!\wconv x}\kern1pt)$,
\vskip4pt
\item{$\kern-9pt$\rm(c$_2$)}
$A$ is weakly sequentially dense,
\vskip4pt
\item{$\kern-9pt$\rm(c$_3$)}
$A$ is weakly dense.
\end{description}
\vskip4pt
$\kern-2pt$They are related by this chain of implications\/$.$
\vskip4pt\noi
$\kern14pt$
{\rm(c$_0$)}
$\limply\!$
{\rm(c$_1$)}
$\limply\!$
{\rm(c$_2$)}
$\limply\!$
{\rm(c$_3$)}.
\vskip4pt
\item{$\kern-6pt$\rm(d)}
$\,A$ is convex $\,\limply A^-=A^{-wl}=A^{-ws}=A^{-w}\!$.
\vskip4pt
\item{$\kern-6pt$\rm(e)}
The following assertions are pairwise equivalent.
\vskip4pt\noi
{$\!$\rm(e$_1$)}
$A$ is weakly sequentially closed.
\vskip4pt\noi
{$\!$\rm(e$_2$)}
$A=A^{-ws}\!$.
\vskip4pt\noi
{$\!$\rm(e$_3$)}
$A=A^{-wl}\!$.
\end{description}
\end{proposition}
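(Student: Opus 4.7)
The plan is to prove the two equivalences (e$_1$)$\ltiff$(e$_3$) and (e$_1$)$\ltiff$(e$_2$) separately; both reduce to unwinding the definitions in Definition 2.1, and no deep tool is needed.

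First I would tackle (e$_1$)$\ltiff$(e$_3$). By Definition 2.1(d), $A^{-wl}$ is exactly the collection of weak limits of weakly convergent $A$-valued sequences, and constant sequences give $A\sse A^{-wl}$ at once. Weak sequential closedness of $A$, by Definition 2.1(a), is literally the assertion $A^{-wl}\sse A$. So (e$_1$) is equivalent to the double inclusion $A=A^{-wl}$, which is (e$_3$).

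For (e$_1$)$\ltiff$(e$_2$) I would first record the auxiliary observation that an arbitrary intersection of weakly sequentially closed sets is again weakly sequentially closed: any weakly convergent sequence drawn from the intersection lies in each member of the family, whose weak sequential closedness puts the weak limit back in every member, and hence in the intersection. Applied to the family of weakly sequentially closed supersets of $A$, this shows $A^{-ws}$ itself is weakly sequentially closed, so (e$_2$) directly forces (e$_1$). Conversely, (e$_1$) places $A$ inside that defining family, so $A^{-ws}\sse A$; the reverse inclusion $A\sse A^{-ws}$ is immediate from the definition of $A^{-ws}$ (and is recorded in Proposition 2.1(b)), giving $A=A^{-ws}$.

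The only step that requires even a moment of care is the stability of weak sequential closedness under arbitrary intersections, and that is completely standard; everything else is definition-chasing, so I do not anticipate any genuine obstacle.
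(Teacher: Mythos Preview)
Your proposal is correct and follows essentially the same route as the paper: for (e$_1$)$\ltiff$(e$_3$) both you and the paper simply read off from Definitions~2.1(a,d) that weak sequential closedness is precisely the inclusion $A^{-wl}\sse A$, while $A\sse A^{-wl}$ is automatic. For (e$_1$)$\ltiff$(e$_2$) the paper dismisses this as ``trivially equivalent by Definition 2.1(b)'' (since $A^{-ws}$ is by definition the smallest weakly sequentially closed superset of $A$), whereas you spell out the underlying reason that $A^{-ws}$ is itself weakly sequentially closed via stability under intersections; this is a harmless elaboration rather than a different argument.
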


\vskip0pt
\begin{proof}
(e) Assertions $(e_1)$ and $(e_2)$ are trivially equivalent
by Definition 2.1(b)$.$ If $A$ is weakly sequentially closed, then
${A^{-wl}\kern-1pt=A}$ by Definitions 2.1(a,d) and so $(e_1)$ implies
$(e_3).$ Conversely, if ${A^{-wl}\kern-1pt=A}$, equivalently, if
$A^{-wl}\kern-1pt=\{{x\in\X}\!:$ ${x=w\hbox{\,-}\lim x_n}$ with
${x_n\in A}\}\sse A$, then
$\{{x=w\hbox{\,-}\lim x_n}$ with ${x_n\in A}$ $\limply$ ${x\in A}\}$,
and hence $A$ is weakly sequentially closed by Definition 2.1(a)$.$ Thus
$(e_3)$ implies $(e_1)$.
\end{proof}

\vskip4pt
Although it may happen $A^{-wl}\subset A^{-ws}$ (proper inclusion) in
Proposition 2.1(b), this is not the case if $A$ is weakly sequentially
closed by Proposition 2.1(e).

%%%%%%%%%%%%%%%%%%%%%%%%%%%%%%%%%%%%%%%%%%%%%%%%%%%%%%%%%% SECTION 3
\section{Supercyclic Vectors}

Let $\BX$ be the normed algebra of all bounded linear operators of a normed
space $\X$ into itself$.$ Given an operator ${T\kern-1pt\in\kern-1pt\BX}$
consider its power sequence $\{T^n\}_{n\ge0}.$ The orbit $\Oe_T(y)$ or
${\rm Orb}\kern1pt(T\kern-1pt,y)$ of a vector ${y\in\X}$ under an operator
${T\kern-1pt\in\kern-1pt\BX}$ is the set
$$
\Oe_T(y)={\bigcup}_{n\ge0}T^ny=\big\{T^ny\in\X\!:\,n\in\NN_0\big\}
$$
where $\NN_0$ denotes the set of nonnegative integers, and we write
${\bigcup}_{n\ge0}T^ny$ for the set
${\bigcup}_{n\ge0}T^n(\{y\})={\bigcup}_{n\ge0}\{T^ny\}.$ The orbit $\Oe_T(A)$
of a set ${A\sse\X}$ under $T$ is likewise defined$:$
$\Oe_T(A)=\bigcup_{n\ge0}T^n(A)\!=\!\bigcup_{z\in A}\Oe_T(z).$ Let
$[x]=\span\{x\}$ stand for the sub\-space of $\X$ spanned by a singleton
$\{x\}$ at a vector ${x\in\X}$, which is a one-dimensional subspace of $\X$
whenever $x$ is nonzero$.$ The projective orbit of a vector ${y\in\X}$ under
an operator ${T\in\BX}$ is the orbit of span of $\{y\}$; that is, the orbit
$\Oe_T([y])$ of $[y]$:
$$
\Oe_T([y])
=\Oe_T(\span\{y\})
={\bigcup}_{z\in[y]}\Oe_T(z)
=\big\{\alpha T^ny\in\X\!:\;\alpha\in\FF,\;n\in\NN_0\big\}.
$$

\vskip2pt
{\it Supercyclicity}\/ means denseness of projective orbits (if denseness
holds)$.$ 

\vskip6pt\noi
{\it Note}\/$.$
Clearly, $\Oe_T(\span\{y\})$ and $\span\Oe_T(y)$ are different sets$.$
Denseness of the latter is referred to as cyclicity, and denseness of the
orbit itself is referred to as hypercyclicity$.$ These will not be addressed
in this paper$.$ (For a brief discussion on them see, e.g.,
\cite[Sections 2 and 3]{KD1}; for a thorough view see, e.g., \cite{BM2} and
\cite{GP}$.$)

%%%%%%%%%%%%%%%%%%%%%%%%%%% DEFINITION 3.1
\vskip4pt\noi
\begin{definition}
$\!$Let ${T\kern-1pt\in\BX}$ be an operator on a normed space $\X$.
\begin{description}
\vskip4pt
\item$\kern-5pt$(a)
A vector ${y\kern-1pt\in\kern-1pt\X}$ is {\it strongly supercyclic}\/
(or {\it supercyclic}\/) for
$T$ if $\Oe_T([y])^-\!\!=\X$.
\vskip4pt
\item$\kern-5pt$(b)
A vector ${y\kern-1pt\in\kern-1pt\X}$ is {\it weakly l-sequentially
supercyclic}\/ for $T$ if $\Oe_T([y])^{-wl}\!=\X$.
\vskip4pt
\item$\kern-5pt$(c)
A vector ${y\kern-1pt\in\kern-1pt\X}$ is {\it weakly sequentially
supercyclic}\/ for $T$ if $\Oe_T([y])^{-ws}\!=\X$.
\vskip4pt
\item$\kern-5pt$(d)
A vector ${y\kern-1pt\in\kern-1pt\X}$ is {\it weakly supercyclic}\/
for $T$ if $\Oe_T([y])^{-w}\!=\X$.
\end{description}
\end{definition}

\vskip2pt
An operator $T$ is {\it strongly supercyclic}\/ (or simply
{\it supercyclic}\/), {\it weakly l-sequentially super\-cyclic}\/,
{\it weakly sequentially supercyclic}\/, or {\it weakly supercyclic}\/ if
there exists a strongly, weakly l-sequentially, weakly sequentially, or weakly
supercyclic vector $y$ for it$.$ Any form of cyclicity for an operator $T$
implies it acts on a separable space $\X.$ According to Proposition 2.1(c)
and Definition 3.1,
$$
\newmatrix{\!
_{_{\hbox{\ehsc strong}}}         & _{_{_{\textstyle\limply}}}\!\! &
_{\hbox{\ehsc weak l-sequential}} & _{_{_{\textstyle\limply}}}\!\! &
_{\hbox{\ehsc weak sequential}}   & _{_{_{\textstyle\limply}}}\!\! &
_{\hbox{\ehsc weak}}              & \!\!.                            \cr
\hbox{\ehsc supercyclicity}       &                                &
\hbox{\ehsc supercyclicity}       &                                &
\hbox{\ehsc supercyclicity}       &                                &
\hbox{\ehsc supercyclicity}       &                                  \cr}
$$
\vskip4pt\noi
The above implications are nonreversible (see, e.g.,
\cite[pp.38,39]{Shk}, \cite[pp.259,260]{BM2})$.$

\vskip4pt
A word on terminology$.$ Weak l-sequential supercyclicity was considered in
\cite{BCS} (and implicitly in \cite{BM1}), and it was referred to as weak
1-sequential supercyclicity in \cite{Shk}$.$ Although there are reasons for
such a terminology we have changed it here to weak l-sequential supercyclicity,
replacing the numeral ``1'' with the letter ``l'' for ``limit'' which better
describes the way this notion has been introduced here so far.

%%%%%%%%%%%%%%%%%%%%%%%%%%%%%%%%%%%%%%%%%%%%%%%%%%%%%%%%%% SECTION 4
\section{Auxiliary Notation, Terminology, and Auxiliary Results}

Take an arbitrary subset $A$ of the normed space $\X$ and set
\vskip4pt\noi
\begin{description}
\item{}
$A^{-0}=A^-$, the strong closure of $A$ (or the {\it 0-closure}\/ of $A$),
\vskip2pt
\item{}
$A^{-1}=A^{-wl}$, the weak limit set of $A$ (or the {\it 1-closure}\/ of $A$),
\vskip2pt
\item{}
$A^{-2}=A^{-ws}$, the weak sequential closure of $A$ (or the {\it 2-closure}\/
of $A$),
\vskip2pt
\item{}
$A^{-3}=A^{-w}$, the weak closure of $A$ (or the {\it 3-closure}\/ of $A$).
\end{description}
\vskip4pt\noi
Thus, according to Proposition 2.1(b),
$$
A\sse A^{-0}\sse A^{-1}\sse A^{-2}\sse A^{-3}\sse\X,
$$
and hence, following the denseness chain of Proposition 2.1(c),
$$
A^{-0}\!=\X\;\;\limply\;\;A^{-1}\!=\X\;\;\limply\;\;A^{-2}\!=\X
\;\;\limply\;\;A^{-3}\!=\X,
$$
where the notions of $k$-{\it denseness}\/ ($A^{-k}\kern-1pt=\X$) are in
general distinct for each $k={0, 1,2,3}.$ Accordingly, a set $A$ is
{\it 0-closed}\/, {\it 2-closed}\/ or {\it 3-closed}\/ if it is strongly
closed, weakly sequentially closed or weakly closed, respectively$.$ From
Proposition 2.1(a)
$$
\hbox{$A$ is $3$-closed $\;\limply$ $A$ is $2$-closed $\:\limply$
$A$ is $0$-closed},
$$
and from Proposition 2.1(e)
$$
\hbox{$A$ is $2$-closed}\; \iff A=A^{-1} \iff A=A^{-2}.
$$
{\it Strongly open}\/ (or {\it 0-open}\/) and {\it weakly open}\/ (or
{\it 3-open}\/) are sets which are open in the norm or in the weak topologies
of $\X$ (complements of strongly and weakly closed sets)$.$ A set $A$ is
{\it weakly sequentially open}\/ (or {\it 2-open}\/) if its complement is
weakly sequentially closed (i.e., ${A\sse\X}$ is $2$-open if ${\X\\A}$ is
$2$-closed)$.$ By Proposition 2.1(a)
$$
\hbox
{$A$ is $3$-open $\;\limply$ $A$ is $2$-open $\:\limply$ $A$ is $0$-open}.
$$
The norm topology and the weak topology are precisely the collections of all
\hbox{0-open} and 3-open sets, respectively$.$ Consider the collection of all
$2$-open (i.e., of all weakly sequentially open) subsets of $\X.$ This is
a topology on $\X$ as well.

%%%%%%%%%%%%%%%%%%%%%%%%%%% PROPOSITION 4.1
\vskip4pt\noi
\begin{proposition}
The collection of all\/ $2$-open sets is a topology on\/ $\X$.
\end{proposition}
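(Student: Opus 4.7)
The plan is to work with complements and verify the three axioms for the family of $2$-closed (weakly sequentially closed) subsets of $\X$: namely, that $\void$ and $\X$ are $2$-closed, that arbitrary intersections of $2$-closed sets are $2$-closed, and that finite unions of $2$-closed sets are $2$-closed. By de Morgan, this is equivalent to the topology axioms for the $2$-open sets.

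The first axiom is trivial: $\void$ contains no sequences at all, and $\X$ contains every weak limit by definition. For the intersection axiom, I would take an arbitrary family $\{C_\alpha\}$ of $2$-closed sets, put $C=\bigcap_\alpha C_\alpha$, and consider a $C$-valued sequence $\{x_n\}$ with $x_n\wconv x$. Since $\{x_n\}\sse C_\alpha$ for every $\alpha$ and each $C_\alpha$ is weakly sequentially closed, one gets $x\in C_\alpha$ for all $\alpha$, hence $x\in C$; so $C$ is $2$-closed.

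The finite union axiom is the only step that is not completely formal, and it is where the subsequence argument enters. Given $2$-closed sets $C_1,C_2$ and a sequence $\{x_n\}\sse C_1\cup C_2$ with $x_n\wconv x$, one of the two sets must contain $x_n$ for infinitely many indices; say $\{x_{n_k}\}\sse C_1$. Passing to a subsequence preserves the weak limit, so $x_{n_k}\wconv x$, and the weak sequential closedness of $C_1$ forces $x\in C_1\sse C_1\cup C_2$. Iterating this gives closure under finite unions.

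I expect no real obstacle: the key point is merely to observe that weak convergence passes to subsequences (so one can extract a subsequence lying wholly in one of the two $2$-closed sets). Once this is in place, the three axioms fall out immediately, and the collection of $2$-open subsets of $\X$ is a topology.
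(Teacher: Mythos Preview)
Your proposal is correct and follows essentially the same route as the paper: verify the closed-set axioms by checking that $\void$ and $\X$ are $2$-closed, that arbitrary intersections of $2$-closed sets are $2$-closed, and that finite unions are $2$-closed via a pigeonhole/subsequence argument exploiting that weak convergence passes to subsequences. The only cosmetic difference is that the paper splits the union step into two cases (eventually in one set vs.\ infinitely many in both) and in the latter shows the limit lies in $A\cap B$, whereas you go straight to a single subsequence in one of the sets---your version is slightly leaner but the idea is identical.
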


\begin{proof}
{From} Definition 2.1(a)
$$
\hbox{A is $2$-closed} \iff
\big\{x=w\hbox{\;-}\lim x_n\;\,\hbox{with}\;\,x_n\in A \limply x\in A\big\}.
$$

\vskip2pt\noi
(a) {\it The empty set and the whole set are $2$-open}\/ (since they are
trivially $2$-closed).

\vskip6pt\noi
(b) Take a nonempty intersection ${\bigcap_\gamma A_\gamma}$ of $2$-closed
subsets $A_\gamma$ of $\X.$ Take any ${\bigcap_\gamma A_\gamma}$-valued weakly
convergent sequence $\{c_n\}.$ Since $A_\gamma$ are all $2$-closed, the weak
limit of $\{c_n\}$ lies in each $A_\gamma$, and so in
${\bigcap_\gamma A_\gamma}.$ So ${\bigcap_\gamma A_\gamma}$ is $2$-closed$.$
Thus ${\bigcup_\gamma A_\gamma}=$ ${\X\\\bigcap_\gamma A_\gamma}$ is
$2$-open$.$ Outcome$:$ {\it an arbitrary union of\/ $2$-open sets is\/
$2$-open}\/.

\vskip6pt\noi
(c) Consider the union ${A\cup B}$ of two nonempty $2$-closed subsets $A$
and $B$ of $\X.$ Take any ${A\cup B}$-valued weakly convergent (infinite)
sequence $\{c_n\}$, say
$$
c_n\wconv c\in\X
$$
(i.e., the $\FF$-valued sequence $\{f(c_n)\}$ converges in the metric space
${(\FF,|\cdot|)}$ to $f(c)$ in $\FF$ for every ${f\in\X^*}).$ If $\{c_n\}$
is eventually in one of the sets $A$ or $B$, then $c$ lies in such a set
(because both sets are $2$-closed) and so ${c\in A\cup B}.$ If $\{c_n\}$ is
not eventually in one of the sets, then it has infinitely many entries in $A$
and infinitely many entries in $B.$ Let $\{a_m\}$ and $\{b_m\}$ be
subsequences of $\{c_n\}$ whose entries are all in $A$ and all in $B$,
respectively$.$ Since the above displayed convergence takes place in the
metric space ${(\FF,|\cdot|)}$, every subsequence of $\{f(c_n)\}$ converges to
the same limit $f(c)$ for every ${f\in\X^*}\!.$ Then $\{a_m\}$ and $\{b_m\}$
converge weakly to $c$:
$$
a_m\wconv c\in\X
\quad\;\hbox{and}\;\quad
b_m\wconv c\in\X.
$$
Because $A$ and $B$ are $2$-closed, we get ${c\in A\cap B}.$ Hence
${c\in A\cup B}$ again$.$ Outcome$:$ the union of a pair of $2$-closed sets is
$2$-closed$.$ Thus (by induction) a finite union of $2$-closed sets is
$2$-closed, and so {\it a finite intersection of $2$-open sets is $2$-open}\/.
\end{proof}

\vskip4pt
Since a set $A$ is $0$-closed, $2$-closed, or $3$-closed if and only if
$A=A^{-0}\!$, $A=A^{-2}\!$, or $A=A^{-3}\!$, we say a set $A$ is
{\it 1-closed}\/ if ${A=A^{-1}}$ (and {\it 1-open}\/ if its complement is
$1$-closed)$.$ Such a definition of $1$-closedness collapses to the
definition of $2$-closedness since ${A\!=\!A^{-1}\!\!\iff\!\!A\!=\!A^{-2}}$
(Proposition 2.1(e))$.$ Even though ${A^{-1}\!\sse\!A^{-2}}\!$, if $A$ is not
$1$ or not $2$-closed, then $A^{-1}$ may be properly included in $A^{-2}\!$,
and the notion of $1$-denseness implies (but is not implied by) the notion
of $2$-denseness$.$ We refer to the weak limit set $A^{-1}$ of $A$ as the
$1$-closure of $A.$ This is an abuse of terminology since the map
${A\mapsto\!A^{-1}}$ is not a topological closure operation$.$ Strong, weak
sequential, and weak topologies are referred to as 0, 2, and 3-topologies,
respectively$.$ There is no 1-topology (and so $1$-closure and $1$-denseness
are not topological terminologies)$.$

\vskip4pt
The $k$-{\it interior}\/ of a set $A$, denoted $A^{\circ k}\!$, is the
interior of it regarding the respective notion of $k$-openness$:$ the largest
$k$-open set included in $A$ (i.e., the union of all $k$-open subsets of
$A).$ Since $1$-closedness coincides with $2$-closedness, the notions of
$1$-interior and $2$-interior coincide as well:
$A^{\circ 1}\!=A^{\circ 2}\!.$ Also, $({\X\\A})^{-k}\!={\X\\A}^{\circ k}$ and
$({\X\\A})^{\circ k}\!={\X\\A}^{-k}$ for $k={0,2,3}$ since these are bona
fide closures on different topologies$.$ For ${k=1}$ these identities survive
as inclusions only$.$ Indeed,
$({\X\\A})^{-1}\sse({\X\\A})^{-2}\!={\X\\A}^{\circ 2}\!={\X\\A}^{\circ 1}$ and
$({\X\\A})^{\circ 1}\!=({\X\\A})^{\circ 2}\!={\X\\A}^{-2}\sse{\X\\A}^{-1}\!.$
A set is $k$-{\it nowhere dense}\/ if its $k$-closure has empty $k$-interior
(i.e., $(A^{-k})^{\circ k}=\void).$ The notions of $1$-open and
$2$-open coincide, but since $A^{-1}$ may be properly included in
$A^{-2}\kern-1pt$, it may happen
$\void=(A^{-1})^{\circ 1}\kern-1pt\subset\kern-1pt(A^{-2})^{\circ 2}\kern-1pt.$
The next results will be required later.

%%%%%%%%%%%%%%%%%%%%%%%%%%% PROPOSITION 4.2
\vskip4pt\noi
\begin{proposition}
Suppose\/ ${B\kern-1pt\sse\kern-1ptA}$ are nonempty subsets of a normed
space\/ $\X.$ If the difference\/ ${A\\B}$ lies in a finite union of
one-dimensional subspaces of
$\X$, then
$$
A^{-k}=\X
\;\limply\;
B^{-k}=\X
\;\;\;\hbox{for every}\;\;
k=0,1,2,3.
$$
\end{proposition}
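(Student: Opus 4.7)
The plan is to combine the decomposition $A\sse B\cup M$ with information about the set $M=[x_1]\cup\cdots\cup[x_N]$ containing $A\\B$, deducing $\X\\M\sse B^{-k}$ and $M\sse B^{-k}$ in turn. First note that each $[x_j]$ is a finite-dimensional subspace of $\X$ and hence weakly closed, so $M$ is weakly closed, and therefore $k$-closed for every $k=0,1,2,3$.

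\emph{Stage 1 (showing $\X\\M\sse B^{-k}$).} From $A\sse B\cup M$ I get $A^{-k}\sse(B\cup M)^{-k}$. For $k\in\{0,2,3\}$, $\cdot^{-k}$ is a topological closure operator (Proposition 4.1 provides this for $k=2$), so $(B\cup M)^{-k}=B^{-k}\cup M^{-k}=B^{-k}\cup M$. For $k=1$ a subsequence pigeonhole argument applies: if $c_n\in B\cup M$ with $c_n\wconv y$, then either infinitely many $c_n\in B$ (forcing $y\in B^{-1}$) or infinitely many $c_n\in M$ (forcing $y\in M$ by the weak closedness of $M$). Either way $A^{-k}\sse B^{-k}\cup M$, and the hypothesis $A^{-k}=\X$ yields $\X\\M\sse B^{-k}$.

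\emph{Stage 2 (showing $M\sse B^{-k}$).} Using infinite-dimensionality of $\X$, pick $v\in\X\\\span\{x_1,\ldots,x_N\}$. For every $y\in M$ and $n\ge1$, $y+\frac{1}{n}v\notin M$: if $y=\lambda x_i$ and $y+\frac{1}{n}v=\mu x_j$, then $v=n\mu x_j-n\lambda x_i\in\span\{x_1,\ldots,x_N\}$, a contradiction. Thus $y_n:=y+\frac{1}{n}v\in\X\\M\sse B^{-k}$ and $y_n\to y$ strongly. For $k=0$, $B^{-0}$ is strongly closed by definition; for $k=2,3$, Proposition 2.1(a,e) implies $B^{-2}$ and $B^{-3}$ are strongly closed; so in these three cases $y\in B^{-k}$ follows at once from the strong convergence $y_n\to y$.

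The genuine obstacle is $k=1$, because the weak limit set $B^{-1}$ need not be strongly closed in general. To close this case I would, for each $n$, fix a sequence $(b_n^m)_{m\ge1}\sse B$ with $b_n^m\wconv y_n$ as $m\to\infty$ and then by a diagonal procedure select indices $m(n)$ so that the resulting $B$-valued sequence $b_n^{m(n)}$ satisfies $b_n^{m(n)}\wconv y$. The strong (hence weak) convergence $y_n\to y$ reduces this to arranging that each $b_n^{m(n)}$ approximates $y_n$ sufficiently well in the weak sense, and this weak-convergence diagonal argument is where I expect the principal technical work to reside.
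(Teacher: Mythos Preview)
For $k=0,2,3$ your two-stage argument is correct; it differs from the paper's route (which argues that $A\setminus B$ is $k$-nowhere dense and deduces $B^{-k}=A^{-k}$ directly), but both reach the conclusion. The genuine gap is precisely where you flag it: the diagonal selection in Stage~2 for $k=1$ cannot be carried out. To obtain $b_n^{m(n)}\wconv y$ you would need $f\big(b_n^{m(n)}-y_n\big)\to0$ for \emph{every} $f\in\X^*$, but weak convergence $b_n^m\wconv y_n$ only supplies, for each fixed $f$, an $m$-threshold depending on $f$; there is no uniform choice of $m(n)$. This is not a technicality to be tidied up later --- it is exactly the obstruction behind the possible strict inclusion $A^{-wl}\subsetneq A^{-ws}$ that the paper records after Proposition~2.1: the weak-limit-set map $A\mapsto A^{-wl}$ is not idempotent, so $y_n\in B^{-1}$ with $y_n\to y$ strongly does \emph{not} force $y\in B^{-1}$. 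Even when $\X^*$ is separable, the diagonal sequence need not be bounded, so one cannot recover weak convergence from convergence on a countable separating family of functionals.

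The paper bypasses your Stage~2 for $k=1$ altogether. Working directly with an $A$-valued sequence $a_n\wconv x$, it splits on whether $a_n\sconv x$. If $a_n\not\sconv x$, then $\{a_n\}$ cannot be eventually in $M$, because on each one-dimensional piece $[x_j]$ weak and strong convergence coincide (and a pigeonhole over the finitely many $[x_j]$ upgrades this to all of $M$); hence infinitely many $a_n$ lie in $A\setminus M\sse B$, and a subsequence gives $x\in B^{-1}$ at once. If $a_n\sconv x$, the paper invokes its part~(a) identity $A^{-0}=B^{-0}$ to conclude $x\in B^{-0}\sse B^{-1}$. The idea your approach is missing is this reduction: weak convergence \emph{inside} $M$ is automatically strong, so the stubborn case is routed back to the already-handled $k=0$ analysis rather than into a weak-topology diagonalization that cannot succeed.
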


\begin{proof}
If ${B=A}$ the result is tautological$.$ First suppose
${\void\ne B\subset A\sse\X}$ are such that ${A\\B\sse[u]}$, where $[u]$ is an
arbitrary one-dimensional subspace of $\X$ (spanned by a singleton $\{u\}$
at a nonzero vector ${u\in\X}).$ We split the proof into two parts.

\vskip6pt\noi
(a) Consider the $k$-topologies for $k={0,2,3}.$ The identity
${B^{-k}\!=A^{-k}\!}$ holds if and only if the difference ${A\\B}$ is
$k$-nowhere dense, that is, if and only if $((A\\B)^{-k})^{\circ k}=\void.$
Since $((A\\B)^{-k})^{\circ k}\kern-1pt\sse([u]^{-k})^{\circ k}\kern-1pt$,
then $([u]^{-k})^{\circ k}\kern-1pt=\void$ implies
$B^{-k}\kern-1pt=A^{-k}\kern-1pt.$ If $k=0$ (norm topology), then
$([u]^{-0})^{\circ0}\kern-1pt=\void$ because $[u]$ is $0$-closed
and $[u]^{\circ 0}\kern-1pt=\void.$ If $k=$ ${2,3}$, then
$([u]^{-k})^{\circ k}\kern-1pt=\void$ as well, since in a finite-dimensional
subspace weak and strong (and the intermediate weak sequential) topologies
coincide (see, e.g., \cite[Proposition 2.5.13, 2.5.22]{Meg}), and so $[u]$
$0$-closed means $[u]$ $k$-closed and
$[u]^{\circ k}\kern-1pt=[u]^{\circ 0}\kern-1pt$ for $k={2,3}$ on the
one-dimensional subspace $[u].$ Then ${B^{-k}\kern-1pt=A^{-k}\kern-1pt}.$
Hence $A^{-k}\kern-1pt=\X$ implies $B^{-k}\kern-1pt=\X$ for $k={0,2,3}$
whenever ${A\\B}$ lies in a one-dimensional space.

\vskip6pt\noi
(b) For ${k=1}$ proceed as follows$.$ Suppose $A^{-1}\kern-1pt=\X.$ Take an
arbitrary ${x\in\X}.$ Thus there is an $A$-valued sequence $\{a_n\}$ such
that ${a_n\kern-1pt\wconv x}.$ If ${a_n\kern-1pt\sconv x}$, then
${x\in A^{-0}\kern-1pt}.$ But ${B^{-0}\!=A^{-0}\kern-1pt}$ by item (a)$.$
Hence ${x\in B^{-0}\kern-1pt}$ and so ${x\in B^{-1}\kern-1pt}$ (since
${B^{-0}\sse B^{-1}}).$ Therefore ${B^{-1}\kern-1pt=\X}$ (i.e.,
${\X\sse B^{-1}}).$ On the other hand, if ${a_n\kern-1pt\notsconv x}$, then
$\{a_n\}$ is not eventually in ${A\\B\sse[u]}$ (where weak and strong
convergence coincide as we saw above)$.$ Then there is a subsequence $\{b_n\}$
of $\{a_n\}$ for which ${b_n\kern-1pt\not\in A\\B\sse[u]}$ for every $n$, and
so $\{b_n\}$ is a $B$-valued sequence such that ${b_n\wconv x.}$ Therefore
${B^{-1}\kern-1pt=\X}$.

\vskip6pt\noi
Thus ${\void\ne B\subset\kern-1ptA\sse\X}$ and ${A^{-k}\!=\X}$ imply
${B^{-k}\!=\X}$ for every $k={0,1,2,3}$ if ${A\\B}$ lies in a
one-dimensional space, and the same line of reasoning holds if $A\\B$ lies
in a finite union of one-dimensional spaces (properly included in $\X$).
\end{proof}

\vskip4pt\noi
{\it Remark}\/.
Proposition 4.2 still holds if\/ the difference ${A\\B}$ lies in a
finite union of proper finite-dimensional subspaces of\/ $\X$.

%%%%%%%%%%%%%%%%%%%%%%%%%%% PROPOSITION 4.3
\vskip4pt\noi
\begin{proposition}
If $A$ is a set in a normed space $\X$ and ${L\in\BX}$, then
$$
L(A^{-k})\sse L(A)^{-k}
\;\;\hbox{for every}\;\;
k=0,1,2,3.
$$
\end{proposition}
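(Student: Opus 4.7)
The plan is to split the four cases into a sequential pair ($k=0,1$) and a topological pair ($k=2,3$), all driven by the single observation that $L$ respects both strong and weak sequential convergence. Norm continuity is immediate from $L\in\BX$, and weak sequential continuity follows because $f\circ L\in\X^*$ for every $f\in\X^*$, so that $a_n\wconv a$ forces $f(La_n)=(f\circ L)(a_n)\to(f\circ L)(a)=f(La)$.

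For $k=0$ and $k=1$ I would argue directly from the sequence description of $A^{-k}$. Pick $y=Lx$ with $x\in A^{-k}$, and choose a sequence $\{a_n\}\sse A$ with $a_n$ converging to $x$ in the appropriate sense (strongly for $k=0$, weakly for $k=1$). The matching continuity of $L$ yields $La_n\to Lx=y$ in the same sense, and since $La_n\in L(A)$ this places $y$ in $L(A)^{-k}$.

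For $k=3$ I would invoke the standard topological principle that any map continuous between two topologies satisfies $\varphi(\overline A)\sse\overline{\varphi(A)}$; applied to $L$ regarded as a weak-to-weak continuous operator (again because $f\circ L\in\X^*$), this gives $L(A^{-w})\sse L(A)^{-w}$ at once.

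The $k=2$ case is the one I expect to be the main obstacle, because $A\mapsto A^{-ws}$ is not a metric closure and the slogan ``continuous image of closure'' must be re-established in the weak-sequential setting (despite the $2$-closed sets forming a topology by Proposition~4.1, $L$ is not \emph{a priori} known to be continuous for it). My approach is the preimage trick: set $C:=L^{-1}(L(A)^{-2})$ and check that (i)~$A\sse C$, which is immediate from $L(A)\sse L(A)^{-2}$, and (ii)~$C$ is weakly sequentially closed. For (ii) take $x_n\in C$ with $x_n\wconv x$; then $Lx_n\in L(A)^{-2}$, and by the weak sequential continuity of $L$ established at the start, $Lx_n\wconv Lx$; since $L(A)^{-2}$ is $2$-closed, this forces $Lx\in L(A)^{-2}$, i.e.\ $x\in C$. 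Because $A^{-2}$ is by definition the smallest weakly sequentially closed set containing $A$, (i) and (ii) together give $A^{-2}\sse C$, which unwinds to $L(A^{-2})\sse L(A)^{-2}$, completing the argument.
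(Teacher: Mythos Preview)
Your argument is correct in all four cases. The organization, however, differs from the paper's. The paper groups the three \emph{topological} cases $k=0,2,3$ together, observing that for a bounded linear map strong, weak, and weak-sequential continuity all coincide (citing Megginson), so that the generic topological fact $L(\overline{A})\sse\overline{L(A)}$ dispatches those three at once; only the nontopological case $k=1$ is singled out and handled by the same $f\circ L\in\X^*$ computation you give. You instead pair the two \emph{sequential} cases $k=0,1$ and treat $k=2$ by a direct preimage argument. Your preimage trick is in fact exactly the verification that $L$ is continuous for the $2$-topology of Proposition~4.1 (preimages of $2$-closed sets are $2$-closed), so your caveat that ``$L$ is not a priori known to be continuous for it'' is resolved by your own step~(ii); the paper simply asserts this continuity with a reference. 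The trade-off: the paper's route is shorter once one accepts the cited equivalence of continuities, while yours is self-contained and makes explicit why the $2$-case goes through.
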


\begin{proof}
$\!$The inclusion holds for a continuous map $\kern-1ptL\kern-1pt$ between
topological spaces (see, e.g., \cite[Problem 3.46]{EOT})$.$ If $L$ is a
linear continuous map between normed spaces, then weak and strong (and the
intermediate weak sequential) continuities coincide (see, e.g.,
\cite[Theorem 2.5.11]{Meg})$.$ Thus the inclusion holds for $k={0,2,3}$
whenever ${L\in\BX}$ (i.e., $L$ is continuous in the (norm) $0$-topology)$.$
The case of ${k=1}$ requires a separate proof$.$ If ${x\in L(A^{-1}})$, then
${x=La}$ for some ${a\in A^{-1}}.$ But ${a\in A^{-1}}$ if and only if
$f(a)=\lim_jf(a_j)$ with ${a_j\in A}$ for every ${f\in\X^*}\!.$ Since
${f\circ L=L^*\!f\in\X^*}\!$ for the normed-space adjoint $L^*$ (see, e.g.,
\cite[Section 3.2]{Sch} ) we get with $g={L^*\!f\in\X^*\!}$
$$
f(x)=f(La)=(L^*\!f)(a)=g(a)={\lim}_jg(a_j)={\lim}_j(L^*\!f)(a_j)
={\lim}_jf(La_j)
$$
for every ${f\kern-1pt\in\kern-1pt\X^*}\!.$ Since ${L(a_j)\in L(A)}$, then
${x\kern-1pt\in\kern-1ptL(A)^{-1}}\!.$ Thus ${L(A^{-1})\sse L(A)^{-1}}\!.$
\end{proof}

\vskip4pt
For each $k$ a vector ${y\in\X}$ is $k$-{\it supercyclic}\/ for an operator
${T\in\BX}$ (and $T$ is a $k$-{\it supercyclic}\/ operator) if the projective
orbit $\Oe_T([y])$ is $k$-dense in $\X.$ Thus let
\vskip4pt\noi
\begin{description}
\item{}
$Y_0$ be the collection of all strongly supercyclic vectors for $T$,
\vskip2pt
\item{}
$Y_1$ be the collection of all weakly l-sequentially supercyclic
vectors for $T$,
\vskip2pt
\item{}
$Y_2$ be the collection of all weakly sequentially supercyclic
vectors for $T$,
\vskip2pt
\item{}
$Y_3$ be the collection of all weakly supercyclic vectors of $T$.
\end{description}
\vskip2pt\noi
So $T$ {\it is\/ $k$-supercyclic if and only if}\/ ${Y_k\ne\void}.$ According
to Proposition 2.1(b,c),
$$
Y_0\sse Y_1\sse Y_2\sse Y_3,                                     \eqno{(1)}
$$
\vskip2pt\noi
$$
Y_k^{-0}\sse Y_k^{-1}\sse Y_k^{-2}\sse Y_k^{-3}
\quad
\hbox{for every}\;\;k=0,1,2,3.                                   \eqno{(2)}
$$
\vskip4pt

%%%%%%%%%%%%%%%%%%%%%%%%%%%%%%%%%%%%%%%%%%%%%%%%%%%%%%%%%% SECTION 5
\section{Denseness of Supercyclic Vectors}

The punctured projective orbit of a vector $y$ in a normed space $\X$ under
an operator ${T\in\BX}$ is the projective orbit of $y$ excluding the origin,
$$
\Oe_T([y])\\\0
=\big\{\alpha T^ny\in\X\!:\;\alpha\in\FF\\\0,\;n\in\NN_0\big\}\\\0.
$$
By definition of $k$-supercyclicity, for each $k={0,1,2,3}$
$$
y\in Y_k\;\;\iff\;\;(\Oe_T([y])\\\0)^{-k}=\X.                    \eqno{(3)}
$$

%%%%%%%%%%%%%%%%%%%%%%%%%%% LEMMA 5.1
\vskip2pt\noi
\begin{lemma}
For every $k={0,1,2,3}$
$$
y\in Y_k\;\;\limply\;\;(\Oe_T([y])\\\0)\sse Y_k.
$$
\end{lemma}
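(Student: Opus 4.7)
\emph{Plan.} Take $y\in Y_k$ and any $z\in\O_T([y])\\\0$. Write $z=\alpha T^ny$ for some $\alpha\in\FF\\\0$ and $n\in\NN_0$. The goal is to verify $(\O_T([z])\\\0)^{-k}=\X$, which by (3) is equivalent to $z\in Y_k$. If $n=0$ then $[z]=[y]$ and the claim is tautological, so assume $n\ge 1$.

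\emph{Main step.} The identity $[\alpha T^ny]=[T^ny]$ (span is invariant under nonzero scalars) gives
$$
\O_T([z])=\O_T([T^ny])=\big\{\beta T^{m+n}y\in\X\!:\,\beta\in\FF,\;m\in\NN_0\big\}.
$$
Set $A=\O_T([y])\\\0$ and $B=\O_T([z])\\\0$. Then $B\sse A$, and the difference consists precisely of the finitely many initial scaled iterates,
$$
A\\B\sse{\bigcup}_{j=0}^{n-1}[T^jy],
$$
a finite union of one-dimensional subspaces of $\X$. (Here one uses that $T^jy\ne 0$ for every $j\in\NN_0$: otherwise the projective orbit $\O_T([y])$ would lie in a finite-dimensional subspace, which cannot be $k$-dense in the infinite-dimensional $\X$, contradicting $y\in Y_k$.) By (3), $y\in Y_k$ means $A^{-k}=\X$, and an application of Proposition 4.2 to the pair $B\sse A$ yields $B^{-k}=\X$ for every $k=0,1,2,3$. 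Invoking (3) once more gives $z\in Y_k$.

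\emph{Where the work lies.} The argument is essentially a matter of recognizing the right setup: the projective orbit of any nonzero iterate $\alpha T^ny$ differs from the projective orbit of $y$ only by the first $n$ one-dimensional ``slices'' $[T^jy]$. The technical heavy lifting for $k=1$ (where $1$-closure is not a topological closure operator) is already absorbed into Proposition 4.2, so no separate argument is required here; the only minor bookkeeping is the nonvanishing of the iterates $T^jy$, handled by the infinite-dimensionality of $\X$.
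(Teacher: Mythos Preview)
Your proof is correct and follows essentially the same route as the paper's: write $z=\alpha T^ny$, observe that $\O_T([z])$ sits inside $\O_T([y])$ with a difference contained in the finite union $\bigcup_{j=0}^{n-1}[T^jy]$, and invoke Proposition~4.2. The only cosmetic differences are that the paper applies Proposition~4.2 to the full projective orbits rather than the punctured ones (using the definition of $Y_k$ directly instead of (3)), and that you add the justification $T^jy\ne0$, which the paper leaves implicit.
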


\vskip0pt
\begin{proof}
Take an operator ${T\in\BX}$ and an arbitrary $k={0,1,2,3}.$ Suppose
${y\in Y_k}$ (i.e., $\Oe_T([y])^{-k}\!=\X$) and take an arbitrary
${z\in\Oe_T([y])\\\0}.$ Thus $z={\gamma\,T^my}$ for some nonzero scalar
${\gamma\in\FF}$ and some nonnegative integer ${m\in\NN_0}$, and hence
$$
\Oe_T([z])=\Oe_T([T^my])
={\bigcup}_{n\ge0}[T^nT^my]
={\bigcup}_{n\ge0}[T^{n+m}y]
={\bigcup}_{n\ge m}[T^ny].
$$
So $\void\ne\Oe_T([z])\sse\Oe_T([y])$ and the difference
$\Oe_T([y])\\\Oe_T([z])$ lies in a finite union
${\bigcup}_{n\in[0,m-1]}\,[T^ny]$ of one-dimensional subspaces of $\X.$ Since
${\Oe_T([y])^{-k}\!=\X}$, then $\Oe_T([z])^{-k}\!=\X$ by Proposition 4.2$.$
Thus ${z\in Y_k}.$ Therefore ${(\Oe_T([y])\\\0)\sse Y_k}.$
\end{proof}

%%%%%%%%%%%%%%%%%%%%%%%%%%% REMARK 5.1
\vskip4pt\noi
\begin{remark}
Take an arbitrary index $k={0,1,2,3}.$ By Lemma 5.1 and (3),
$$
\hbox{$Y_k\ne\void\;\;\limply\;Y_k^{-k}\!=\X$}.
$$
Moreover, using (1) and (2) this can be readily extended to
$$
Y_k\ne\void
\;\;\limply\;
Y_i^{-j}\!=\X
\quad\hbox{for every}\;\,i,j\in[k,3].
$$
In particular (for ${k=0}$), ${Y_0\ne\void}$ implies $Y_1^{-0}=\X.$ This,
however, is not enough to answer the question whether, for instance,
$$
\void=Y_0\subset Y_1\ne\void
\;\;\;\query\;\;
Y_1^{-0}=\X.
$$
Along this line it was proved in \cite[Proposition 2.1]{San1} that
${Y_3\ne\void}\!\limply\!{Y_3^{-0}\!=\X}.$ So
$$ 
Y_3\ne\void
\;\;\limply\;
Y_3^{-j}\!=\X
\quad\hbox{for all}\;\,j
$$
by (2), which still does not answer the above question$.$ This is
extended in the next theorem (using an argument similar to the one
in $\kern-1pt$\cite[$\kern-1pt$Proposition 2.1]{San1}) to show that
$$
Y_k\ne\void
\;\;\limply\;
Y_k^{-0}\!=\X
\quad
\hbox{for every}\;\;k=0,1,2,3.
$$
In particular, for ${k=1}$ this represents a real and useful gain over the
previously known results along this line, answering the above question, and
leading to a general case for any nonempty set of supercyclic vectors with
respect to any notion of denseness (including the nontopological
$1$-denseness)$.$
\end{remark}

\vskip4pt
Regarding the above remark and the next theorem, the condition $Y_k\ne\void$
is fulfilled whenever ${Y_\ell\ne\void}$ for some ${\ell\in[0,k]}$ by (1).

%%%%%%%%%%%%%%%%%%%%%%%%%%% THEOREM 5.1
\vskip4pt\noi
\begin{theorem}
Take an operator\/ $T\kern-1pt$ on a complex normed space\/ $\X.$ For\/
$k=0,1,2,3$
$$
Y_k\ne\void
\;\;\limply\;
Y_i^{-j}\!=\X
\;\;\hbox{for every\/ $i\in[k,3]$ and all\/ $j=0,1,2,3$}.
$$
\end{theorem}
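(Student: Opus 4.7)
The plan is to reduce the theorem to the single implication $Y_k \ne \void \limply Y_k^{-0} = \X$. Granting this, relation (1) gives $Y_k \sse Y_i$ for any $i \in [k,3]$, so $\X = Y_k^{-0} \sse Y_i^{-0}$; then (2) yields $Y_i^{-j} \supseteq Y_i^{-0} = \X$ for every $j = 0,1,2,3$, completing the theorem.

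To establish the reduced claim, pick $y \in Y_k$, so that $\O_T([y])^{-k} = \X$. Since $k$-denseness implies weak denseness by Proposition 2.1(c), the projective orbit $\O_T([y])$ is weakly dense in $\X$, and so is its linear span $\span\O_T(y) = \{q(T)y : q \in \CC[z]\}$ (which contains $\O_T([y])$). This span is a convex subset of $\X$, so Proposition 2.1(d) makes its weak and strong closures coincide, forcing $(\span\O_T(y))^{-0} = \X$. In other words, $y$ is a \emph{cyclic vector} for $T$: the set $\{q(T)y : q \in \CC[z]\}$ is norm dense in $\X$.

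Given any $x \in \X$ and $\veps > 0$, pick $q \in \CC[z]$ with $\|q(T)y - x\| < \veps/2$. Perturb the coefficients of $q$ slightly to some $\tilde q \in \CC[z]$ with $\tilde q(T) \in \BX$ invertible and $\|\tilde q(T)y - q(T)y\| < \veps/2$. Since $\tilde q(T)$ commutes with $T$, the identity $\O_T([\tilde q(T)y]) = \tilde q(T)\,\O_T([y])$ holds; applying Proposition 4.3 to both $\tilde q(T)$ and its inverse $\tilde q(T)^{-1}$ gives $\O_T([\tilde q(T)y])^{-k} = (\tilde q(T)\,\O_T([y]))^{-k} = \tilde q(T)(\O_T([y])^{-k}) = \tilde q(T)\,\X = \X$. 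Hence $\tilde q(T)y \in Y_k$, and $\|\tilde q(T)y - x\| < \veps$ by the triangle inequality. Since $x$ and $\veps$ were arbitrary, $Y_k^{-0} = \X$.

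The main obstacle is the perturbation step producing an invertible $\tilde q(T)$: this is a spectrum-avoidance argument of the sort underlying \cite[Proposition 2.1]{San1}. Polynomials whose roots all lie outside the spectrum $\sigma(T)$ form a dense subset of $\CC[z]$ in its coefficient topology, and the map $q \mapsto q(T)y$ is continuous from coefficient space into $\X$, so the norm approximation of $x$ is preserved. The complex-scalar hypothesis is essential here, since only over $\CC$ may we freely place polynomial roots to avoid any prescribed set.
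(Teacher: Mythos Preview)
Your reduction to the single claim $Y_k\ne\void\limply Y_k^{-0}=\X$ is correct, and your argument that $y\in Y_k$ forces $(\span\O_T(y))^{-0}=\X$ via convexity is the right opening move (the paper does the same). The gap is in the perturbation step.

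You assert that polynomials whose roots all miss $\sigma(T)$ are dense in $\CC[z]$ in the coefficient topology. This is false whenever $\sigma(T)$ has nonempty interior, which is the generic situation for supercyclic operators. Take for instance $T=2B$ with $B$ the backward shift on $\ell^2$: then $T$ is hypercyclic (hence $k$-supercyclic for every $k$) and $\sigma(T)$ is the closed disk of radius $2$. If $q(z)=z$, any polynomial $\tilde q$ close to $q$ in coefficients has, by Rouch\'e's theorem, a root near $0$ and therefore inside $\sigma(T)$; so $\tilde q(T)$ is never invertible. Raising the degree does not help, since closeness in coefficients still forces a root near each root of $q$. Consequently the vector $Ty$ cannot be norm-approximated by vectors $\tilde q(T)y$ with $\tilde q(T)$ invertible by your mechanism, and the argument stalls.

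The paper's proof replaces invertibility of $\tilde q(T)$ by the weaker requirement that $p(T)$ have \emph{dense range}: from $p(T)(\X)=p(T)\big(\O_T([y])^{-k}\big)\sse\O_T([p(T)y])^{-k}$ (Proposition~4.3, one direction only) one gets $p(T)y\in Y_k$ as soon as $p(T)(\X)^{-k}=\X$. By \cite{Per} dense range of $p(T)$ is equivalent to $p$ not vanishing on $\sigma_{\kern-1ptP}(T^*)$, and the key structural input is that a weakly supercyclic operator satisfies $\#\sigma_{\kern-1ptP}(T^*)\le 1$. Avoiding at most a single point is trivially generic among polynomials, so $P'_T(y)=\{p(T)y:p(\lambda_0)\ne 0\}$ is norm dense in $P_T(y)$, which in turn is norm dense in $\X$. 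Your spectrum-avoidance idea is the right instinct, but the set to avoid is the (tiny) point spectrum of the adjoint, not the (possibly fat) full spectrum of $T$; and for that one must invoke the supercyclicity hypothesis a second time, not merely use it to obtain cyclicity.
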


\vskip0pt\noi
\begin{proof}
Take ${T\in\BX}$.

\vskip4pt\noi
{\it Claim}\/$.$
$\;Y_k\ne\void\;\;\limply\;\;Y_k^{-0}\!=\X$
\;for every\; $k=0,1,2,3$.

\vskip4pt\noi
{\it Proof}\/$.$
Let $k$ be an arbitrary index in ${[0,3]}.$ Suppose ${Y_k\ne\void}$ and take
any ${y\in Y_k}.$ Let $p$ be an arbitrary nonzero polynomial$.$ Then
$$
p(T)(\X)=p(T)(\Oe_T([y])^{-k})
\,\sse\,\big(p(T)(\Oe_T([y]))\big)^{-k}\!=\Oe_T([p(T)y])^{-k}\!,
$$
where the above inclusion holds for each $k$ by Proposition 4.3$.$
Thus if $p(T)(\X)^{-k}\!=\X$, then $\Oe_T([p(T)y])^{-k}\!=\X$ and so
${p(T)y\in Y_k}$ by (3)$.$ In other words, if the range of $p(T)$ is $k$-dense
in $\X$, then the vector $p(T)y$ is $k$-supercyclic whenever $y$ is:
$$
y\in Y_k
\quad\hbox{and}\quad
p(T)(\X)^{-k}=\X\quad\limply\quad p(T)y\in Y_k.                    \eqno{(*)}
$$
Now take the dual $\X^*\!$ of the complex normed space $\X$, let
${T^*\!\in\B[\X^*]}$ stand for the normed-space adjoint of ${T\in\BX}$, and
let $\sigma_{\kern-1ptP}(T^*)$ be the point spectrum (i.e., the set of all
eigenvalues) of $T^*\!.$ According to \cite[Lemma 2]{Per} the range of $p(T)$
is dense in a complex locally convex space if and only if all eigenvalues of
$T^*$ are not zeros of $p.$ The strong (norm) topology of a normed space
yields a locally convex space$.$ Thus the range of $p(T)$ is strongly
dense if and only if all eigenvalues of $T^*$ are not zeros of $p.$ But range
is a linear manifold, thus a convex set, and hence all $k$-closures coincide
(cf$.$ Proposition 2.1(d)) so that
$$
p(T)(\X)^{-k}\!=\X
\;\;\iff\;\;
p(\lambda)\ne\0\;
\hbox{for all}\;\lambda\in\sigma_{\kern-1ptP}(T^*).               \eqno{(**)}
$$
Next for any ${y\in\X}$ consider the sets
$$
P_T(y)=\big\{p(T)y\in\X\!:\,\hbox{$p$ is a polynomial}\big\}=\span\Oe_T(y),
$$
\vskip-2pt\noi
$$
P'_T(y)=\big\{p(T)y\in P_T(y)\!:\,p(\lambda)\ne0
\;\;\hbox{for all}\;\;\lambda\in\sigma_{\kern-1ptP}(T^*)\big\}.
$$
\vskip2pt\noi
According to ($**$) and ($*$),
$$
y\in Y_k
\;\;\limply\;\;
P'_T(y)=\big\{p(T)y\in P_T(y)\!:\;p(T)(\X)^{-k}\!=\X\big\}\sse Y_k.
                                                                \eqno{\rm(i)}
$$
We will show that $P'_T(y)$ is strongly dense in $\X$, and consequently
$Y_k$ is strongly dense in $\X.$ First consider the following auxiliary
result.
$$
\hbox{If\/ $T$ is $3$-supercyclic, then ${\#\sigma_{\kern-1ptP}(T^*)\le1}$}
$$
where $\#$ stands for cardinality$.$ This was verified for supercyclic
operators on a Hilbert space in \cite[Proposition 3.1]{Her}, extended to
supercyclic operators on a normed space in \cite[Theorem 3.2]{AB}, and
further extended to supercyclic operators on a locally convex space in
\cite[Lemma 1, Theorem 4]{Per}$.$ But the weak topology of a normed space is
a locally convex subtopology of the locally convex norm topology --- see e.g.,
\cite[Theorems 2.5.2 and 2.2.3]{Meg}$.$ Then the latter extension holds in
particular on a normed space under the weak topology, thus including weakly
supercyclic operators on a normed space$.$ Since
${Y_k\;\ne\void\limply Y_3\ne\void}$ for any $k$ by (1) we get
$$
Y_k\ne\void
\;\;\limply\;\,
\#\sigma_{\kern-1ptP}(T^*)\le1.                                \eqno{\rm(ii)}
$$
Moreover,
$\Oe_T([y])\kern-1pt=\kern-1pt\Oe_T(\span\{y\})\kern-1pt
\sse\kern-1pt\span\Oe_T(y)\kern-1pt=\kern-1ptP_T(y).$
So
$\big(\span\Oe_T(y)\big)^{_{\scriptstyle{-0}}}\!=\kern-1pt\X$ whenever
$\Oe_T([y])^{-k}\!=\X$
(i.e., whenever ${y\in Y_k}$) for an arbitrary $k$ since $\span\Oe_T(y)$ is
convex (cf$.$ Proposition 2.1(d)\/)$.$ Hence
$$
y\in Y_k
\;\;\limply\;\,
P_T(y)^{-0}=\X.                                               \eqno{\rm(iii)}
$$
If ${\#\sigma_{\kern-1ptP}(T^*)=0}$ and ${y\in Y_k}$, then $P'_T(y)=P_T(y)$
and so by (iii)
$$
y\in Y_k
\;\;\hbox{and}\;\;
\#\sigma_{\kern-1ptP}(T^*)=0
\;\;\limply\;\,
P'_T(y)^{-0}=\X.
$$
On the other hand, if ${\#\sigma_{\kern-1ptP}(T^*)=1}$ (i.e., if
$\sigma_{\kern-1ptP}(T^*)=\{\lambda_0\}$ for some ${\lambda_0\in\CC}$), then
$P'_T(y)=\{{p(T)y\in P_T(y)\!}:{p(\lambda_0)\ne0}\}$ is dense in $P_T(y)$ in
the norm topology, and $P_T(y)$ is dense in $\X$ in the norm topology by
(iii) whenever ${y\in Y_k}.$ Thus
$$
y\in Y_k
\;\;\hbox{and}\;\;
\#\sigma_{\kern-1ptP}(T^*)=1
\;\;\limply\;\,
P'_T(y)^{-0}=\X.
$$
Hence by (i), (ii), and the preceding two implications we get the claimed
result:
$$
y\in Y_k
\;\;\limply\;\,
P'_T(y)^{-0}=\X
\;\;\limply\;\,
Y_k^{-0}=\X.\!\!\!\qed
$$

\vskip4pt\noi
If\/ ${Y_k^{-0}\kern-2pt=\kern-1pt\X}$, then
$\kern-1pt{Y_k^{-j}\kern-2.5pt=\kern-1pt\X}$ for
${j\kern-1pt=\kern-1pt0,1,2,3}$ by (2)$.$ So
$\kern-1pt{Y_i^{-j}\kern-2.5pt=\kern-1pt\X}$ for ${i\kern-1pt\ge\kern-1ptk}$
by (1)$.\!$
\end{proof}

%%%%%%%%%%%%%%%%%%%%%%%%%%% REMARK 5.2
\vskip4pt\noi
\begin{remark}
Propositions 4.1 to 4.3, besides being required for proving Theorem 5.1, may
be seen as relevant by themselves in the fol\-lowing sense$.$ Proposition 4.1
applies standard convergence techniques for building sequential topological
spaces from topological spaces, which is needed to support the topological
case of ${k\kern-1pt=\kern-1pt2}.$ Propositions 4.2 and 4.3 are also standard
for the topological cases of ${k\kern-1pt=\kern-1pt0,2,3}$, but they seem new,
nontrivial, and relevant for the nontopological case of
${k\kern-1pt=\kern-1pt1}.$ In fact, the emphasis of the whole paper,
especially the emphasis of the main result in Theorem 5.1, is on the
nontopological case of ${k\kern-1pt=\kern-1pt1}$, which is here brought
together with the other topological cases$.$ In particular, Theorem 5.1
answers the question posed in Remark 5.1: {\it the set of weakly l-essentially
supercyclic vectors is dense in the norm topology if it is not empty, even if
there is no supercyclic vector}\/,
$$
\void=Y_0\subset Y_1\ne\void
\;\;\;\limply\;\;
Y_1^{-0}=\X.
$$
\end{remark}
\vskip4pt

%%%%%%%%%%%%%%%%%%%%%%%%%%%%%%%%%%%%%%%%%%%%%%%%%%%%%%%%%% SECTION 6
\section{Weak Supercyclicity and Stability}

An immediate consequence of Theorem 5.1 says, roughly speaking, that {\it if
an arbitrary set of supercyclic vectors is not empty, then it is dense with
respect to any notion of denseness}\/. This is properly stated as follows.

%%%%%%%%%%%%%%%%%%%%%%%%%%% COROLLARY 6.1
\vskip4pt\noi
\begin{corollary}
Consider the sets of strongly supercyclic, weakly l-sequentially supercyclic,
weakly sequentially supercyclic, and weakly supercyclic vectors for an
arbitrary normed-space operator$.$ All these sets are dense in the normed
space, regardless the notion of denseness one is considering, provided they
are nonempty.
\end{corollary}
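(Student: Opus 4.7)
The plan is to derive Corollary 6.1 as an immediate reformulation of Theorem 5.1, so the proof should amount to little more than one application of that theorem for each of the four indices $k$. Concretely, I would fix an arbitrary $k\in\{0,1,2,3\}$, assume the corresponding set $Y_k$ of $k$-supercyclic vectors is nonempty, and invoke Theorem 5.1 with $i=k$ to obtain $Y_k^{-j}=\X$ simultaneously for every $j=0,1,2,3$.

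The remaining step is purely linguistic: translate the indexed shorthand of Section 4 back into the vocabulary used in the statement of the corollary. That is, $Y_k^{-0}=\X$, $Y_k^{-1}=\X$, $Y_k^{-2}=\X$, $Y_k^{-3}=\X$ mean exactly that $Y_k$ is strongly dense, weakly l-sequentially dense, weakly sequentially dense, and weakly dense in $\X$, respectively. Since the four sets named in the corollary are precisely $Y_0,Y_1,Y_2,Y_3$, running this reading once for each $k$ dispatches every combination; the inclusions (1) and the denseness chain (2) are not even needed at this stage, since Theorem 5.1 has already absorbed them.

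There is no genuine obstacle, as all the substantive work was carried out in Theorem 5.1 and in the supporting Propositions 4.1--4.3. What is worth flagging in the write-up is that the case $j=1$ is nontopological --- the operation $A\mapsto A^{-1}$ is not a topological closure, as stressed in Section 4 --- yet Theorem 5.1 treats it on the same footing as the three genuinely topological notions. This is exactly what allows the corollary to be stated uniformly, without any qualifiers distinguishing one notion of denseness from another, and in particular it yields the striking consequence $\void=Y_0\ss Y_1\ne\void\limply Y_1^{-0}=\X$ highlighted in Remark 5.2.
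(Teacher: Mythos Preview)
Your proposal is correct and matches the paper's approach exactly: the paper presents Corollary 6.1 without a proof, introducing it as ``an immediate consequence of Theorem 5.1,'' which is precisely the derivation you spell out. Your added remark about the nontopological case $j=1$ is apt but optional, since the paper already makes that point in Remark 5.2.
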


\vskip2pt
An operator ${T\kern-1pt\in\kern-1pt\BX}$ is {\it power bounded}\/ if
$\sup_{n\ge0}\|T^n\|\kern-1pt<\kern-1pt\infty.$ It is {\it strongly stable}\/
if ${T^nx\sconv0}$ for every ${x\in\X}$, and {\it weakly stable}\/ if
${T^nx\wconv0}$ for every ${x\in\X}$.

%%%%%%%%%%%%%%%%%%%%%%%%%%% REMARK 6.1
\vskip4pt\noi
\begin{remark}
If $\{T_n\}$ is a bounded sequence of operators on a normed space $\X$ and
$\{T_ny\}$ converges strongly to $Ty$ for some ${T\!\in\kern-1pt\BX}$ for
every $y$ in a strongly dense subset $Y$ of $\X$, then $\{T_nx\}$ converges
strongly to $Tx$ for every ${x\in\X}.$ In particular, since ${Y_k^{-0}=\X}$
for each $\,k={0,1,2,3}\,$ whenever ${Y_k\ne\void}\,$ by Theorem 5.1, we get:
\vskip6pt\noi
{\narrower\narrower
{\it If\/ ${T\kern-1pt\in\kern-1pt\BX}$ is a power bounded\/ $k$-supercyclic
operator for an arbitrary\/ $k={0,\kern-.5pt1,\kern-.5pt2,\kern-.5pt3}$, and
if\/ ${T^ny\kern-1pt\sconv0}$ for every\/ ${y\kern-1pt\in\kern-1ptY_k}$,
then\/ $T$ is strongly stable}\/.
\vskip6pt}\noi
It was proved in \cite[Theorem 2.2]{AB} that {\it if a power bounded operator
is strongly supercyclic, then it is strongly stable}\/$.$ Thus the above
strong stability result holds for ${k\kern-1pt=\kern-1pt0}$ without any
additional assumption$.$ {\it Does any form of weak supercyclicity {\rm (i.e.,
$k$-supercyclicity for $k={1,2,3}$)} imply weak stability for power bounded
operators}\/$?$ The question was posed and investigated in \cite{KD1} and
remains unanswered even if Banach-space power bounded operators are
restricted to Hilbert-space contractions.
\end{remark}

\vskip2pt
Here is the weak version of the above italicized displayed statement.

%%%%%%%%%%%%%%%%%%%%%%%%%%% COROLLARY 6.2
\vskip4pt\noi
\begin{corollary}
If\/ ${T\in\BX}$ is a power bounded\/ $k$-supercyclic operator for an
arbitrary\/ ${k=0,1,2,3}$, and if\/ ${T^ny\wconv0}$ for every\/ ${y\in Y_k}$,
then\/ $T$ is weakly stable.
\end{corollary}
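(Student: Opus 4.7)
The plan is to mimic the strong-stability argument sketched in Remark 6.1 but transplanted into the weak topology. First I would observe that $k$-supercyclicity means $Y_k \ne \void$, so Theorem 5.1 (equivalently, Corollary 6.1) applies and yields $Y_k^{-0} = \X$; that is, the hypothesis ``$T^n y \wconv 0$ for every $y \in Y_k$'' is supplied on a \emph{norm-dense} subset of $\X$. Power boundedness furnishes the uniform bound $M := \sup_{n \ge 0} \|T^n\| < \infty$.

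Next I would establish the following standard propagation lemma, the weak-topology analogue of the one implicit in Remark 6.1: if $\{S_n\} \sse \BX$ is uniformly bounded and $S_n y \wconv 0$ for every $y$ in a norm-dense subset $Y$ of $\X$, then $S_n x \wconv 0$ for every $x \in \X$. The argument fixes $x \in \X$ and $f \in \X^*$, picks $y \in Y$ with $\|x - y\|$ small, and splits $f(S_n x) = f(S_n(x - y)) + f(S_n y)$; the first summand is controlled uniformly in $n$ by $\|f\|\,M\,\|x - y\|$, and the second tends to $0$ by hypothesis. Taking $\|x - y\|$ arbitrarily small then shows $\limsup_n |f(S_n x)|$ is arbitrarily small, hence $f(S_n x) \to 0$, and since $f$ is arbitrary, $S_n x \wconv 0$.

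Applying this lemma with $S_n = T^n$ and $Y = Y_k$ completes the proof that $T^n x \wconv 0$ for every $x \in \X$, so $T$ is weakly stable. There is no real obstacle; the only nontrivial input is the strengthening $Y_k^{-0} = \X$ (rather than just weak denseness of $Y_k$), which is precisely what Theorem 5.1 contributes. Mere weak denseness would be insufficient to absorb the norm error term $\|f\|\,M\,\|x - y\|$ in the splitting argument, which is exactly what makes this corollary a genuine consequence of the main theorem rather than of the earlier known denseness statements.
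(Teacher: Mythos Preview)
Your proposal is correct and follows essentially the same approach as the paper: the paper also invokes Theorem 5.1 to obtain $Y_k^{-0}=\X$, proves the identical propagation lemma (stated there for $T_ny\wconv Ty$ rather than $S_ny\wconv0$, but the argument and the splitting $|f((T_n-T)x)|\le\|f\|(\sup_n\|T_n\|+\|T\|)\|y_m-x\|+|f(T_ny_m-Ty_m)|$ are the same), and applies it with $T_n=T^n$. Your closing remark about why norm denseness rather than weak denseness is essential is exactly the point.
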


\begin{proof}
This is a consequence of Theorem 5.1 and the following result.

\vskip4pt\noi
{\it Claim}\/.
If $\{T_n\}$ is a bounded sequence of operators on a normed space $\X$ and
$\{T_ny\}$ converges weakly to $Ty$ for some ${T\!\in\kern-1pt\BX}$ for
every $y$ in a strongly dense subset $Y$ of $\X$, then $\{T_nx\}$ converges
weakly to $Tx$ for every ${x\in\X}.$

\vskip4pt\noi
{\it Proof}\/$.$
Take any ${x\kern-1pt\in\kern-1pt\X}.$ If ${Y^{-0}\!=\X}$, then there exists
a $Y\!$-valued sequence $\{y_m\}$ converging strongly to $x$, which means
$\|y_m-x\|\to0.$ Suppose ${T_ny\wconv Ty}$ for every ${y\in Y}\!$, which means
${f(T_ny)\to f(Ty)}$ for every ${f\in\X^*\!}$ and every ${y\in Y}\!$, and so
${|f(T_ny_m-Ty_m)|\to0}$ for every $m.$ Thus since for every
${f\kern-1pt\in\kern-1pt\X^*\!}$ and every ${x\kern-1pt\in\kern-1pt\X}$
\begin{eqnarray*}
|f\big((T_n-T)\,x)|
&\kern-6pt\le\kern-6pt&
|f((T_n-T)\,(y_m-x))|+|f((T_n-T)\,y_m)|                                 \\
&\kern-6pt\le\kern-6pt&
\|f\|\,({\sup}_n\|T_n\|+\|T\|)\kern1pt\|y_m-x\|+|f(T_ny_m-Ty_m)|,
\end{eqnarray*}
then we get the claimed assertion: ${T_nx\wconv Tx}$
for every ${x\in\X}.\!\!\!\qed$

\vskip4pt\noi
Suppose the power sequence of a power bounded operator on a complex normed
space is weakly stable over a set of $k$-supercyclic vectors $Y_k.$
$\kern-.5pt$Since $\kern-1pt{Y_k^{-0}\kern-2pt=\kern-1pt\X}$ for every
$k\kern-1pt=\kern-1pt{0,1,2,3}$ if ${Y_k\kern-1pt\ne\kern-1pt\void}$ by
$\kern-1pt$Theorem 5.1, the above claim ensures the stated result$.\kern-4pt$
\end{proof}

\vskip2pt
In particular, if $T$ is power bounded, ${Y_1\ne\void}$, and ${T^ny\wconv0}$
for every $y$ in the set $Y_1$ of all weakly l-sequentially supercyclic
vectors, then $T$ is weakly stable.

%%%%%%%%%%%%%%%%%%%%%%%%%%%%%%%%%%%%%%%%%%%%%%%%%%%%%%%%%% REFERENCES
\vskip-10pt\noi
\bibliographystyle{amsplain}

\end{document}